\documentclass[12pt,reqno]{amsart}

\usepackage[mathlines]{lineno}

\usepackage{times}
\usepackage{amsfonts}
\usepackage{amssymb}
\usepackage{latexsym}
\usepackage{xcolor}

\newtheorem{theorem}{Theorem}
\newtheorem{lemma}[theorem]{Lemma}
\newtheorem{prop}[theorem]{Proposition}
\newtheorem{definition}[theorem]{Definition}
\newtheorem{remark}{Remark}
\newtheorem{corollary}[theorem]{Corollary}

\usepackage{comment}

\begin{document}
\begin{center}
{\bf ON WIENER-HOPF OPERATORS OVER LINEARLY ORDERED DISCRETE ABELIAN GROUPS}
\end{center}

\begin{center}
A. R. Mirotin\\
\end{center}

\begin{center}
amirotin@yandex.ru
\end{center}

\vspace{5mm}

AMS 2010 Mathematics Subject Classification. Primary
 43A15, 47B35.
 
 \
 
 Keywords: Toeplitz operator, Wiener-Hopf operator, Fredholm operator, Fredholm index, spectrum, essential
spectrum, ordered Abelian group, compact Abelian group.

\

\section*{Abstract}
Let $X$ denotes a discrete
linearly ordered Abelian group, and let
$X_+$ be the positive cone in $X$. In this note we compute the Fredholm index and study spectral properties of Wiener-Hopf operators $W_kg=1_{X_+}(k\ast g)$, $k\in l_2(X_+)$ in the space $l_2(X_+)$ in terms of their symbols $\check k$ where $\check k$ stands for the inverse Fourier transform of  $k$.

\section{Introduction and Preliminaries}

Everywhere below, $X$ denotes a discrete
linearly ordered Abelian group,
$X_+$ is the positive cone in $X$. This means that $X$ has a fixed subsemigroup $X_+$ containing
the identity character $1$,
where $X_+\cap X_+^{-1}=\{1\}$ and $X=X_+\cup X_+^{-1}$. Then
the semigroup $X_+$ induces a linear ordering in $X$ compatible
with the group structure, according to the rule $\xi\leq\chi:=\chi\xi^{-1}\in
X_+$. As is well known, a (discrete) Abelian group $X$ can be linearly ordered if and only if it is torsion-free
(see, for example, \cite{Rud}), which, in turn, is equivalent to
its character group $G$ being compact and connected \cite{Pont}. However, the linear ordering
in $X$ is, generally speaking, not unique. In what follows, we assume that this order is chosen and fixed. In applications, the role of $X$ is often played by subgroups of the additive group $\mathbb{R}^n$,
endowed with the discrete topology. In this situation, $G$ is the Bohr compactification of $X$ (see, for example, \cite{EMRS} and the references therein).

By $\widehat{\varphi}$ (or $\mathcal{F}\varphi$) we denote the Fourier transform of a function $\varphi$ from $L^1(G)$ or $L^2(G)$, i.~e. (with the appropriate interpretation of the integral)
$$
\widehat{\varphi}(\xi)=\int\limits_G\varphi(x)\overline{\xi(x)}dx,\ \xi\in X,
$$
where $dx$ is the normalized Haar measure of $G$.

By $\check k$ we denote the inverse Fourier transform $\mathcal{F}^{-1}k$ of a function $k$ on $X$.

Consider the space
$$l_2(X_+)=\{f:X_+\rightarrow\mathbb{C}:\sum\limits_{\chi\in X_+}| f(\chi)| ^2<\infty \}$$
(here each function $f$ has at most countable support) with
the scalar product
$$
\langle f,g\rangle=\sum\limits_{\chi\in X_+}f(\chi)\overline{g(\chi)}.
$$
The spaces $l_p(X_+)$ are defined similarly. Obviously, the system $\{1_{\{\chi\}}\}_{\chi\in
X_+}$ (we denote by $1_A$ the indicator of the set $A$) of indicators of one-point subsets is an orthonormal basis of the space $l_2(X_+)$.

In the following we assume that $l_p(X_+)$ is embedded in $l_p(X)$ in a natural way. The notion of a Wiener-Hopf operator over $X$ is due to Adukov  \cite{Adukov}.

\begin{definition}\label{Definition 1}  Let $k\in l_2(X)$. A \textit{Wiener-Hopf operator}
$W_k$ on $l_2(X_+)$ is defined by the equality
$$
W_k g=1_{X_+}(k\ast g)
$$
where $*$ stands for the convolution of functions on $X$ (as is known, $l_2(X)\ast l_2(X)\subset l_1(X)\subset l_2(X)$).
\end{definition}

Thus,
$$
W_kg(\chi)=\sum\limits_{\xi\in X_+}k(\chi\xi^{-1}) g(\xi)\ (\chi\in X_+).
$$

\begin{definition}\label{symbol}
We call the inverse Fourier transform of a function $k$, i.~e., the function
$$
\check k(x)=\sum\limits_{\xi\in X}k(\xi)\xi(x),
$$
 the \textit{symbol of the operator} $W_k$ if the right-hand side makes sense.
\end{definition}

The goal of this note is to study the Fredholm and spectral properties of Wiener-Hopf operators in the space $l_2(X_+)$ in terms of their symbol, if $k\in l_2(X_+)$, $\check k\in C(G)$.
In the case of $k\in l_1(X_+)$, the Wiener-Hopf operators in the spaces $l_p(X_+),\ p\geq 1$ and several others were considered by Adukov in the paper  \cite{Adukov}, where the issues of invertibility and Fredholm property of these operators were discussed in detail, but a formula for the index was not obtained and the spectrum of such operators was not studied. 

\begin{remark}
Note that our conditions $k\in l_2(X_+),\check k \in C(G)$ is less restrictive than $k\in l_1(X_+)$. 
\end{remark}

Main results of this note, namely, Theorems \ref{Theorem2} and  \ref{Theorem9} presents the discrete analog of the Gohberg-Krein  formula for the Fredholm index  and, respectively,  the description of the spectrum and essential spectra  of Wiener-Hopf operators $W_kg=1_{X_+}(k\ast g)$ with continuous symbols that are inverse Fourier transforms of  functions   $k\in l_2(X)$ in terms of the symbol.

\section{ Fredholm properties of Wiener-Hopf operators}

Let us recall the concepts and facts of the theory
of Fredholm operators needed for what follows. For a bounded operator $A$ in a Banach space $Y$, the notation $A\in\Phi_+(Y)$ means that
its image ${\rm Im}A$ is closed and the kernel
${\rm Ker}A$ is finite-dimensional, while the notation $A\in\Phi_-(Y)$ means that the quotient space
${\rm Coker}A:=Y/{\rm Im }A$ is finite-dimensional; operators in $\Phi_-(Y)\cup\Phi_+(Y)$ are called
\textit{semi-Fredholm},
and operators in $\Phi(Y):=\Phi_-(Y)\cap\Phi_+(Y)$ are called \textit{Fredholm} in
$Y$.

\textit{The index of the operator} $A\in \Phi(Y)$
is the number
$$
{\rm Ind}A:={\rm dim}{\rm Ker}A-{\rm dim}{\rm Coker}A.
$$

We will  need the following concepts introduced in \cite{SbMath}.

\begin{definition}\label{index of a character}
The rotation index (wading number) of a character $\chi\in X$ is defined as follows (the sign $\#$ denotes the number of elements of a finite set):

{\rm 1)} ${\rm ind}\chi = \#(X_+\setminus \chi X_+)$, if $\chi\in X_+$ and the set
$X_+\setminus \chi X_+$ is finite;

{\rm 2)} ${\rm ind}\chi ={\rm ind}\chi_1-{\rm ind}\chi_2$, if
$\chi=\chi_1\chi_2^{-1}$, where $\chi_j\in X_+$, and both sets
$X_+\setminus \chi_j X_+$ are finite $(j=1,2)$.

In other cases, the character is assumed to have an infinite index.
\end{definition}

The set of characters with a finite index is denoted by $X^i$. 

Recall that by the Bohr-van Kampen theorem each function $\varphi$ from the group $C(G)^{-1}$ of invertible elements of the algebra $C(G)$ has the representation
$$
 \varphi=\chi e^g
$$
where $\chi\in X$ and $g\in C(G)$. Moreover, the character $\chi$ in this representation is unique.

We will set
$$
\Phi(G):=X^ie^{C(G)}, {\rm ind}\varphi:={\rm ind}\chi, \mbox{ if } \varphi\in \Phi(G), \varphi=\chi e^g.
$$

The index formula for  the Wiener-Hopf operator looks as follows.

\begin{theorem}\label{Theorem2}. Let $k\in l_2(X_+),
\check k\in C(G)$. The Wiener-Hopf operator $W_k$ on $l_2(X_+)$ is Fredholm if and only if $\check k\in \Phi(G)$. In this case,
$$
{\rm Ind}W_k=-{\rm ind}\check k.
$$
\end{theorem}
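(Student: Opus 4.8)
The plan is to follow the Gohberg--Krein strategy: using the Bohr--van Kampen factorization $\check k=\chi e^{g}$, reduce an arbitrary symbol to the two elementary cases of a character and of an exponential, while paying close attention to the one place where a general ordered group departs from the circle, namely the finite-index condition. Throughout I write $P$ for the orthogonal projection $l_2(X)\to l_2(X_+)$, $Q=I-P$, and $L_k$ for the convolution operator $g\mapsto k\ast g$ on $l_2(X)$, so that $W_k=PL_kP$ restricted to $l_2(X_+)$; for $\varphi\in C(G)$ I denote by $T_\varphi$ the Wiener--Hopf operator with symbol $\varphi$, i.e. $T_\varphi=W_{\mathcal F\varphi}$, and I note that $L_{\mathcal F\varphi}$ is unitarily equivalent under $\mathcal F$ to multiplication by $\varphi$, so that $\|T_\varphi\|\le\|\varphi\|_\infty$.

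First I would dispose of the two building blocks. For $\chi\in X_+$ one has $\mathcal F\chi=1_{\{\chi\}}$ and $T_\chi$ is the isometry $g(\cdot)\mapsto g(\chi^{-1}\cdot)1_{\chi X_+}$; it is injective with range $l_2(\chi X_+)$ and cokernel $l_2(X_+\setminus\chi X_+)$, so it is Fredholm iff $\chi\in X^i$, and then ${\rm Ind}\,T_\chi=-{\rm ind}\,\chi$. A general $\chi=\chi_1\chi_2^{-1}\in X^i$ is handled by adjoints, since $T_{\chi^{-1}}=T_\chi^{*}$ and ${\rm ind}$ is additive. For analytic factors the symbol calculus is \emph{exact}: because $M_\psi$ preserves $l_2(X_+)$ when $\mathcal F\psi$ is supported on $X_+$, one gets $T_{\varphi\psi}=T_\varphi T_\psi$ whenever $\psi$ is analytic (or $\varphi$ anti-analytic). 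In particular $T_{e^{a}}T_{e^{-a}}=T_{e^{-a}}T_{e^{a}}=T_1=I$ for every analytic $a$, so $T_{e^{a}}$ is invertible, and likewise for anti-analytic symbols.

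Next comes the claim ${\rm Ind}\,T_{e^{g}}=0$ for $g\in C(G)$. I would approximate $g$ uniformly by trigonometric polynomials $g_n$ (available since the characters span a dense self-adjoint subalgebra of $C(G)$) and split each $g_n=a_n+b_n$ into its analytic and anti-analytic parts, both trigonometric polynomials. The exact calculus then gives $T_{e^{g_n}}=T_{e^{b_n}}T_{e^{a_n}}$, which is invertible. Since $e^{g_n}\to e^{g}$ uniformly, $T_{e^{g_n}}\to T_{e^{g}}$ in norm, so once $T_{e^{g}}$ is known to be Fredholm the local constancy of the index forces ${\rm Ind}\,T_{e^{g}}=0$. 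The Fredholmness of $T_{e^{g}}$ I would extract from a compactness lemma: for $g\in C(G)$ the operators $T_{e^{g}}T_{e^{-g}}-I$ and $T_{e^{-g}}T_{e^{g}}-I$ are compact. By the projection identity $T_{\varphi\psi}-T_\varphi T_\psi=PL_{\mathcal F\varphi}\,Q\,L_{\mathcal F\psi}P$ these differences factor through the Hankel operator $H_{e^{\pm g}}=QL_{\mathcal F e^{\pm g}}P$, so the lemma reduces to the compactness of $H_{e^{\pm g}}$.

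This compactness is the main obstacle, and it is exactly where the ordered-group setting is genuinely harder than the classical one: over a general $X$ the Hartman phenomenon fails, since for a character $\chi^{-1}$ with $\chi$ of infinite index the matrix $\langle L_{\mathcal F\chi^{-1}}1_\xi,1_\eta\rangle$ shows that $H_{\chi^{-1}}$ has infinite rank despite $\chi^{-1}$ being continuous. Continuity alone is therefore insufficient, and one must use that $e^{\pm g}$ has trivial Bohr--van Kampen character; I would prove the lemma by the same analytic/anti-analytic approximation, checking that the finite-rank Hankel operators of the trigonometric approximants converge in norm because the negative Fourier data of $e^{\pm g}$ is controlled by that of $g$. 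Granting the lemma, the synthesis is routine: writing $\check k=\chi e^{g}$ with $\chi\in X^i$ (and $\chi\in X_+$, as $k\in l_2(X_+)$ makes the symbol analytic with nonnegative winding character), the finite index of $\chi$ makes $PL_{\mathcal F\chi}Q$ finite-rank, whence $W_k=T_\chi T_{e^{g}}+\text{compact}$ is Fredholm with ${\rm Ind}\,W_k=-{\rm ind}\,\chi+0=-{\rm ind}\,\check k$. For necessity, if $\check k(x_0)=0$ I would localize at $x_0$ through a Coburn-type singular sequence to defeat Fredholmness; and if $\check k$ is invertible but $\chi\notin X^i$, then the Fredholm operator $W_k(T_{e^{g}})^{-1}$, reduced modulo compacts via the lemma, equals $T_\chi$, which is not Fredholm --- a contradiction.
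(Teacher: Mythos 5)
Your route is entirely different from the paper's and much longer: the author's proof of Theorem~\ref{Theorem2} consists of Proposition~\ref{Proposition5} (the unitary equivalence $W_k\cong T_{\check k}$ under the Fourier transform) plus a citation of the already-established Fredholm criterion and index formula for Toeplitz operators over ordered groups, \cite[Theorem 4]{SbMath}. You are in effect re-proving that cited theorem from scratch by the classical Gohberg--Krein scheme, and the re-proof breaks at its central step.

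The gap is the compactness lemma. The claim that $T_{e^{g}}T_{e^{-g}}-I$ and $T_{e^{-g}}T_{e^{g}}-I$ are compact for every $g\in C(G)$ is false whenever the order on $X$ is non-Archimedean, i.e.\ whenever $X^i\neq X$. You correctly observe that the Hartman phenomenon fails: for $\xi\in X_+\setminus X^i$ the Hankel operator $H_{\xi^{-1}}$ is a partial isometry with infinite-dimensional initial space $l_2(X_+\setminus\xi X_+)$, hence non-compact. But your proposed repair does not engage with this. A trigonometric polynomial $g_n$ approximating $g$ will in general carry Fourier mass on characters of infinite index, so the Hankel operators of the approximants $e^{\pm g_n}$ are \emph{not} finite rank (and not compact); saying that ``the negative Fourier data of $e^{\pm g}$ is controlled by that of $g$'' does not help, because the negative Fourier data of $g$ itself is the problem. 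Concretely, take $X=\mathbb{Z}^2$ with the lexicographic order, $\xi=(1,0)\notin X^i$, and $g=c(\xi+\xi^{-1})$ with small real $c>0$. Then $T_{e^{g}}T_{e^{-g}}-I=-H_{\overline{e^{g}}}^{*}H_{e^{-g}}$ equals, up to conjugation by the invertible operators $T_{e^{\pm c\xi}}$, the operator $-H_{e^{c\xi^{-1}}}^{*}H_{e^{-c\xi^{-1}}}=c^{2}H_{\xi^{-1}}^{*}H_{\xi^{-1}}+O(c^{3})$, and $H_{\xi^{-1}}^{*}H_{\xi^{-1}}$ is an infinite-rank orthogonal projection; hence the semicommutator has essential norm at least of order $c^{2}$ and is not compact, even though $T_{e^{g}}=T_{e^{c\xi^{-1}}}T_{e^{c\xi}}$ happens to be invertible in this example. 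With the lemma gone, your argument for Fredholmness of $T_{e^{g}}$ for a general $g\in C(G)$ collapses: what survives is only that $T_{e^{g}}$ is a norm limit of invertible operators, which proves nothing without a uniform bound on the inverses, and no such bound is available because the analytic/anti-analytic splitting $g_n=a_n+b_n$ is not bounded in the sup norm. The necessity direction also collapses, since it reduces $W_k(T_{e^{g}})^{-1}$ to $T_\chi$ ``modulo compacts via the lemma,'' and moreover the error term $T_{\chi e^{g}}-T_\chi T_{e^{g}}$ is finite rank only when $\chi\in X^i$, which is exactly what is in question there. Establishing invertibility of $T_{e^{g}}$ without compact semicommutators is precisely the hard content of \cite[Theorem 4]{SbMath} (going back to \cite{Murphy}); it requires an idea beyond the circle-group template, which is why the paper delegates it rather than reproving it.
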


For the proof, it will be convenient to generalize the definition of a Toeplitz operator over a group $G$ (see, e.g., \cite{SbMath}) to the case of a symbol $\varphi\in L^2(G)$.

\begin{definition}\label{Definition3} Let $1\le p\le\infty$. The Hardy space $H^p(G)$ over a group
$G$ is defined as follows:
$$
H^p(G)=\{f\in L^p(G):\widehat{f}(\chi)=0\ \forall\chi\notin X_+\}.
$$

We endow $H^p(G)$ with $L^p$ norm.
\end{definition}
\bigskip

\begin{definition}\label{Definition4}  Let $\varphi\in L^2(G)$. We define the Toeplitz operator $T_\varphi$ in $H^2(G)$ initially on the set of analytic polynomials ${\rm Pol}_+(G)$ (the linear span of $X_+$) by the equality
$$
T_\varphi q=P_+(\varphi q) \ (q\in {\rm Pol}_+(G)),
$$
where $P_+:L^2(G)\to H^2(G)$ is the orthogonal projection.
\end{definition}

Theorem \ref{Theorem2} follows from \cite[Theorem 4]{SbMath} and the following proposition.

\bigskip
\begin{prop}\label{Proposition5} The operator $T_\varphi$ in $H^2(G)$ with symbol $\varphi\in L^2(G)$ is unitary equivalent to the operator $W_k$ in $l_2(X)$, where $k=\widehat \varphi$, and vice versa, for any $k\in l_2(X)$, the operator $W_k$ is unitary equivalent to the operator $T_\varphi$ with symbol $\varphi=\check k$.
\end{prop}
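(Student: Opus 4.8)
The plan is to produce an explicit unitary operator intertwining the two operators, namely the Fourier transform $\mathcal{F}:L^2(G)\to l_2(X)$, and to verify that it carries $T_\varphi$ to $W_k$. The key observation is that $\mathcal{F}$ is a unitary isomorphism of $L^2(G)$ onto $l_2(X)$ by the Plancherel theorem, and that under this isomorphism the Hardy space $H^2(G)$ is taken exactly onto $l_2(X_+)$: by Definition \ref{Definition3}, $f\in H^2(G)$ means $\widehat{f}(\chi)=0$ for all $\chi\notin X_+$, which is precisely the statement that $\widehat f\in l_2(X_+)\subset l_2(X)$. Hence $\mathcal{F}$ restricts to a unitary map $H^2(G)\to l_2(X_+)$, and the Plancherel identity $\langle f,h\rangle_{L^2(G)}=\langle \widehat f,\widehat h\rangle_{l_2(X)}$ guarantees it is an isometric bijection.

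First I would set $k=\widehat\varphi$ and check the core computation $\mathcal{F}(P_+(\varphi q))=W_k(\mathcal{F}q)$ for $q\in\mathrm{Pol}_+(G)$. The right-hand side is handled by the convolution theorem: since $\widehat{\varphi q}=\widehat\varphi\ast\widehat q=k\ast\widehat q$, applying the projection $P_+$ on the $L^2(G)$ side corresponds exactly to applying $1_{X_+}$ on the $l_2(X)$ side, because $\mathcal{F}$ carries $P_+$ to multiplication by $1_{X_+}$ (both kill the Fourier coefficients indexed by $X\setminus X_+$). Thus $\mathcal{F}(T_\varphi q)=\mathcal{F}(P_+(\varphi q))=1_{X_+}(k\ast\widehat q)=W_k(\widehat q)$, which is the desired intertwining on the dense set $\mathrm{Pol}_+(G)$.

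The main technical point to be careful about — and the step I expect to be the only real obstacle — is the justification that this identity on the dense subset $\mathrm{Pol}_+(G)$ actually identifies the two operators as closed (or bounded) operators, given that $\varphi$ is merely in $L^2(G)$ rather than $L^\infty(G)$. With $\varphi\in L^2$ the product $\varphi q$ for a polynomial $q$ lies in $L^1(G)$ and, after projection, the identity $\widehat{\varphi q}=k\ast\widehat q$ with $k\in l_2(X)$ and $\widehat q$ finitely supported is unproblematic; the subtlety is purely domain-theoretic, so I would state that $T_\varphi$ and $W_k$ are understood on their natural (equal, under $\mathcal{F}$) domains, matching the convention of Definition \ref{Definition4}. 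The converse direction is immediate: given $k\in l_2(X)$, set $\varphi=\check k=\mathcal{F}^{-1}k$, so that $\widehat\varphi=k$ and the same computation applies verbatim.

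In summary, the proof reduces to three routine verifications — that $\mathcal{F}$ is unitary $L^2(G)\to l_2(X)$, that it maps $H^2(G)$ onto $l_2(X_+)$ and intertwines $P_+$ with $1_{X_+}$, and that the convolution theorem yields $\mathcal{F}(\varphi q)=k\ast\mathcal{F}q$ — together with the observation that $k=\widehat\varphi\iff\varphi=\check k$, which makes the two halves of the statement mirror images of each other.
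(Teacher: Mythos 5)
Your proposal is correct and follows essentially the same route as the paper: the intertwining unitary is the Fourier transform, the two key identities are the convolution theorem $\mathcal{F}(\varphi q)=\widehat\varphi\ast\mathcal{F}q$ and the fact that $\mathcal{F}$ carries $P_+$ to multiplication by $1_{X_+}$, and the verification is done on the dense set $\mathrm{Pol}_+(G)$ (equivalently, on finitely supported functions in $l_2(X_+)$). Your extra remark about the domain-theoretic convention for $\varphi\in L^2(G)$ is a reasonable refinement that the paper handles implicitly via Definition \ref{Definition4}.
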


\begin{proof} The assertion of the proposition follows from the fact that for any analytic polynomial $q\in {\rm Pol}_+(G)$ and any function $\varphi\in L^2(G)$, the equality
$$
T_\varphi q=\mathcal{F}^{-1}W_{\widehat \varphi}\mathcal{F}q
$$
is valid.
For proof, note that for any function $g\in l_2(X)$, the equality
$$
\varphi \mathcal{F}^{-1} g= \mathcal{F}^{-1}(\widehat{\varphi}\ast g)
$$
 holds. Furthermore, by Plancherel's theorem
$$
\mathcal{F}P_+\mathcal{F}^{-1}g=1_{X_+}g\ (g\in l_2(X)).
$$
Therefore, for $g\in l_2(X)$ we have
$$
\mathcal{F}T_\varphi \mathcal{F}^{-1}g=\mathcal{F}P_+(\varphi \mathcal{F}^{-1} g)=\mathcal{F}P_+\mathcal{F}^{-1}(\widehat{\varphi}\ast g)=W_{\widehat \varphi}g.
$$
To complete the proof, it remains to set $q=\mathcal{F}^{-1}g$ in the last equality
and note that $q$ runs over ${\rm Pol}_+(G)$ when $g$ runs over the space $l_{00}(X_+)$ of finitely supported functions on $X_+$).
\end{proof}

Proposition \ref{Proposition5} allows us to extend other properties of Toeplitz operators over groups to the case of Wiener-Hopf operators.

\begin{corollary}\label{Corollary6} The operator $W_k$ is bounded if $k\in l_2(X), \check k\in L^\infty(G)$. Moreover, $\|W_k\|=\|\check k\|_\infty$.
\end{corollary}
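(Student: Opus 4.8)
The plan is to reduce everything to Toeplitz operators by means of Proposition \ref{Proposition5} and then to establish the norm identity $\|T_\varphi\|=\|\varphi\|_\infty$ for symbols $\varphi\in L^\infty(G)$. Since $G$ is compact with normalized Haar measure, $L^\infty(G)\subset L^2(G)$, so with $\varphi=\check k\in L^\infty(G)$ Proposition \ref{Proposition5} applies and yields a unitary $\mathcal F$ intertwining $W_k$ (initially defined on the dense subspace $l_{00}(X_+)$) with $T_\varphi$ (initially defined on ${\rm Pol}_+(G)$). Consequently it suffices to show that $T_\varphi$ extends to a bounded operator on $H^2(G)$ with $\|T_\varphi\|=\|\varphi\|_\infty$; boundedness and the exact value of the norm then transfer to $W_k$ verbatim.

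The upper bound is immediate. For $q\in{\rm Pol}_+(G)$ the orthogonal projection $P_+$ is a contraction and $\varphi\in L^\infty(G)$, so
$$
\|T_\varphi q\|_2=\|P_+(\varphi q)\|_2\le\|\varphi q\|_2\le\|\varphi\|_\infty\|q\|_2 .
$$
Hence $T_\varphi$ is bounded on the dense set ${\rm Pol}_+(G)$ with $\|T_\varphi\|\le\|\varphi\|_\infty$ and extends to $H^2(G)$.

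For the reverse inequality I would use a \emph{shift to infinity} argument adapted to the ordered group $X$. Fix $\varepsilon>0$; since multiplication by $\varphi$ on $L^2(G)$ has norm $\|\varphi\|_\infty$ and trigonometric polynomials are dense in $L^2(G)$, choose a polynomial $g$ with $\|g\|_2=1$ and $\|\varphi g\|_2>\|\varphi\|_\infty-\varepsilon$. Writing the Fourier expansion $\varphi g=\sum_\eta c_\eta\,\eta$, its support is countable and $\sum_\eta|c_\eta|^2<\infty$. The aim is to select a single character $\chi\in X_+$ large enough that simultaneously (i) $\chi g\in{\rm Pol}_+(G)$, and (ii) $\|P_-(\varphi\chi g)\|_2^2<\varepsilon$, where $P_-=I-P_+$. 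A direct computation of Fourier coefficients gives the clean formula
$$
\|P_-(\varphi\chi g)\|_2^2=\sum_{\eta<\chi^{-1}}|c_\eta|^2 ,
$$
and, since $|\chi|\equiv1$, it then follows for such a $\chi$ that
$$
\|T_\varphi(\chi g)\|_2^2=\|\varphi\chi g\|_2^2-\|P_-(\varphi\chi g)\|_2^2=\|\varphi g\|_2^2-\|P_-(\varphi\chi g)\|_2^2>(\|\varphi\|_\infty-\varepsilon)^2-\varepsilon ,
$$
while $\|\chi g\|_2=1$. Letting $\varepsilon\to0$ yields $\|T_\varphi\|\ge\|\varphi\|_\infty$, and combined with the upper bound this gives equality.

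The main obstacle is the selection of $\chi$ in step (ii). In the classical circle case one multiplies by $z^n$ and lets $n\to\infty$ along $\mathbb N$, which automatically clears the negative frequencies; here the linear order on $X$ need not contain a cofinal sequence, so I cannot simply iterate a single shift. Instead I would argue directly: both conditions (i) and (ii) are threshold conditions of the form ``$\chi\ge(\text{something})$'' in the linear order, because $\chi g$ has finite support and, by square-summability, one may truncate $\sum_\eta|c_\eta|^2$ to a finite set carrying all but $\varepsilon$ of the mass and require $\chi^{-1}$ to lie below its (finitely many) minimal elements. Taking $\chi\in X_+$ above the maximum of these finitely many thresholds meets both requirements at once, and this is exactly the place where the linear ordering and the countability of the Fourier support of $\varphi g$ are used.
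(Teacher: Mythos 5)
Your argument is correct, and it does more work than the paper does: the paper's proof of this corollary is a one-line citation, deducing everything from the (quoted, not proved) facts that $T_\varphi$ is bounded for $\varphi\in L^\infty(G)$ and that $\|T_\varphi\|=\|\varphi\|_\infty$, together with the unitary equivalence of Proposition \ref{Proposition5}. You perform exactly the same reduction via Proposition \ref{Proposition5} (correctly noting $L^\infty(G)\subset L^2(G)$ so that the proposition applies), but then you actually prove the norm identity rather than cite it. Your upper bound is the standard contraction estimate, and your lower bound is a sound adaptation of the classical translation-to-infinity argument: the formula $\|P_-(\varphi\chi g)\|_2^2=\sum_{\eta<\chi^{-1}}|c_\eta|^2$ is right, and your observation that both requirements on $\chi$ are threshold conditions determined by finitely many elements of the linearly ordered group (the finite support of $g$, and a finite set carrying all but $\varepsilon$ of the mass of $\sum|c_\eta|^2$) correctly replaces the cofinal sequence $z^n$ available on the circle; this is precisely the point where the general ordered-group setting requires care, and you handle it. The only cosmetic remark is that the condition you need is $\chi^{-1}\le\min F$ rather than strictly below the minimal elements, but this changes nothing. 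In short: same route as the paper, with the key Toeplitz norm identity proved in full instead of imported from the literature on Toeplitz operators over ordered groups.
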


\begin{proof}
This follows immediately from the fact that the operator $T_\varphi$ is bounded if $\varphi\in L^\infty(G)$, and also from the equality $\|T_\varphi\|=\|\varphi\|_\infty$.
\end{proof}

\begin{corollary}\label{Corollary7} For any function $\varphi\in L^2(G)$ satisfying the condition $\widehat{\varphi}\in l_1(X)$, the operator $T_\varphi$ is bounded.
\end{corollary}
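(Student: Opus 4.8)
The plan is to reduce the statement to Corollary~\ref{Corollary6} through the unitary equivalence furnished by Proposition~\ref{Proposition5}. First I would set $k=\widehat\varphi$. Since $\varphi\in L^2(G)$, Proposition~\ref{Proposition5} applies directly and shows that $T_\varphi$ is unitarily equivalent to the Wiener-Hopf operator $W_k$ on $l_2(X)$. Consequently, boundedness of $T_\varphi$ is equivalent to boundedness of $W_k$, and it suffices to verify the hypotheses of Corollary~\ref{Corollary6} for this particular $k$.

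Next I would identify the symbol $\check k$ of $W_k$. By Fourier inversion one has $\check k=\check{\widehat\varphi}=\varphi$. The role of the hypothesis $\widehat\varphi\in l_1(X)$ is to make this identity hold in a strong sense: because $k=\widehat\varphi\in l_1(X)$ and every character satisfies $|\xi(x)|=1$, the series $\check k(x)=\sum_{\xi\in X}k(\xi)\xi(x)$ converges absolutely and uniformly on $G$, so $\check k\in C(G)$ with $\|\check k\|_\infty\le\|k\|_1=\|\widehat\varphi\|_1$. Hence $\varphi$ coincides almost everywhere with this continuous function, and in particular $\varphi=\check k\in L^\infty(G)$.

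Finally I would invoke Corollary~\ref{Corollary6} with this $k$: since $k\in l_1(X)\subset l_2(X)$ and $\check k\in L^\infty(G)$, that corollary yields that $W_k$ is bounded, with $\|W_k\|=\|\check k\|_\infty\le\|\widehat\varphi\|_1$. Transporting this back through the unitary equivalence of Proposition~\ref{Proposition5} shows that $T_\varphi$ is bounded (and gives the same norm estimate $\|T_\varphi\|\le\|\widehat\varphi\|_1$), which completes the argument.

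The main (and only mild) obstacle is the passage $\widehat\varphi\in l_1(X)\Rightarrow\varphi\in L^\infty(G)$, i.e.\ the justification that $\varphi$ agrees a.e.\ with the continuous function $\check{\widehat\varphi}$. This is the standard $l_1$-Fourier-inversion argument on a compact Abelian group: the uniform convergence of the inverse Fourier series produces a continuous representative, which must coincide with $\varphi$ a.e.\ since the two have identical Fourier coefficients. Everything else is a direct bookkeeping application of the results already established above.
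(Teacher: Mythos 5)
Your argument is correct, but it takes a different route from the paper at the key step. The paper's proof is a one-line appeal to Young's inequality: since $\widehat\varphi\in l_1(X)$, the convolution estimate $\|\widehat\varphi\ast f\|_2\le\|\widehat\varphi\|_1\|f\|_2$ immediately shows that $W_{\widehat\varphi}$ is bounded on $l_2(X_+)$ with norm at most $\|\widehat\varphi\|_1$, and Proposition \ref{Proposition5} transfers this to $T_\varphi$. You instead pass through the symbol: you use the absolute and uniform convergence of the series $\sum_{\xi}\widehat\varphi(\xi)\xi(x)$ to identify $\varphi$ a.e.\ with a continuous (hence bounded) function, and then invoke Corollary \ref{Corollary6}, i.e.\ ultimately the equality $\|T_\varphi\|=\|\varphi\|_\infty$. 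Both routes are valid and both use Proposition \ref{Proposition5} for the transfer. The paper's argument is more elementary and self-contained (it needs nothing beyond the convolution inequality), whereas yours requires the $l_1$-inversion lemma on a compact Abelian group plus the norm identity behind Corollary \ref{Corollary6}; in compensation, your route yields the stronger conclusion that $\varphi$ is (a.e.\ equal to) a continuous function and that $\|T_\varphi\|$ equals $\|\varphi\|_\infty$ exactly, not merely that it is bounded by $\|\widehat\varphi\|_1$. Your handling of the one delicate point --- that $\widehat\varphi\in l_1(X)$ forces $\varphi$ to agree a.e.\ with $\check{\widehat\varphi}\in C(G)$, by uniqueness of Fourier coefficients --- is correctly justified.
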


\begin{proof}
Indeed, under the conditions of the corollary, the operator $W_{\widehat \varphi}$ is bounded by the inequality 
$$\|\widehat \varphi\ast f\|_2\leq \|\widehat \varphi\|_1\|f\|_2.$$
\end{proof}

\begin{corollary}\label{Corollary8} Let $ \check k \in C(G)$. If the operator $W_k$ is semi-Fredholm, then $ \check k$ is invertible in $L^\infty(G)$.
\end{corollary}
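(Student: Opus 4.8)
The plan is to pass to the Toeplitz picture and then show, by a direct localization argument, that a continuous symbol possessing a zero cannot yield a semi-Fredholm operator. By Proposition~\ref{Proposition5}, $W_k$ is unitarily equivalent to $T_\varphi$ on $H^2(G)$ with $\varphi=\check k\in C(G)$, so it suffices to prove the contrapositive: if $\varphi(x_0)=0$ for some $x_0\in G$, then $T_\varphi\notin\Phi_+(H^2(G))\cup\Phi_-(H^2(G))$. Since $G$ is compact, invertibility of a continuous $\varphi$ in $L^\infty(G)$ is equivalent to $\varphi$ having no zeros, so this is precisely the statement. I would rely on two standard facts: the Hilbert-space criterion that $A\in\Phi_+$ forces $\|Ae_n\|\not\to 0$ for every orthonormal sequence $(e_n)$; and the identity $T_\varphi^\ast=T_{\bar\varphi}$, which follows since $P_+$ is an orthogonal projection and $f,g\in H^2(G)$ are fixed by it. We may assume $X$ is nontrivial (otherwise $G$ is a point and the situation is degenerate); torsion-freeness then makes $X_+$ infinite.

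The core is to construct a single orthonormal sequence $(f_n)$ in $H^2(G)$ that witnesses the failure for $T_\varphi$ and its adjoint at once. The key estimate is $\|T_\varphi f\|^2=\|P_+(\varphi f)\|^2\le\int_G|\varphi|^2|f|^2\,dx$, and likewise $\|T_{\bar\varphi}f\|^2\le\int_G|\varphi|^2|f|^2\,dx$, so it is enough to arrange $\int_G|\varphi|^2|f_n|^2\,dx\to 0$. As $|\varphi|^2$ is continuous and vanishes at $x_0$, this holds as soon as the probability measures $|f_n|^2\,dx$ concentrate at $x_0$.

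To produce such $f_n$ I would use a Fejér-type construction adapted to the order. Since $X$ is amenable it carries a Følner net $(F_\alpha)$; because $X$ is linearly ordered, translating $F_\alpha$ by $(\min F_\alpha)^{-1}$ places it inside $X_+$ without changing its Følner ratios, and a further translation by large elements of $X_+$ makes a chosen sequence of such finite sets $A_n\subset X_+$ pairwise disjoint. Set $f_n=|A_n|^{-1/2}\sum_{\chi\in A_n}\overline{\chi(x_0)}\,\chi\in H^2(G)$. Disjointness of the $A_n$ gives orthonormality, while $|f_n(x)|^2=|A_n|^{-1}\bigl|\sum_{\chi\in A_n}\chi(xx_0^{-1})\bigr|^2$ is nonnegative with total mass $1$ and Fourier coefficient at $\eta$ equal to $|A_n\cap\eta A_n|/|A_n|$. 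Approximating $|\varphi|^2$ uniformly by trigonometric polynomials, each involving only finitely many characters $\eta$, and choosing $A_n$ so that $|A_n\cap\eta A_n|/|A_n|\to 1$ for those finitely many $\eta$ (possible from the Følner net, and this is the step that lets one extract a genuine \emph{sequence} even when $X$ is uncountable, as for $X=\mathbb{R}^n_{d}$), one obtains $\int_G|\varphi|^2|f_n|^2\,dx\to|\varphi(x_0)|^2=0$. Hence $\|T_\varphi f_n\|\to 0$ and $\|T_{\bar\varphi}f_n\|\to 0$, so the criterion gives $T_\varphi\notin\Phi_+$ and $T_\varphi^\ast=T_{\bar\varphi}\notin\Phi_+$, i.e.\ $T_\varphi\notin\Phi_-$, which completes the contrapositive.

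I expect the main obstacle to be exactly this last construction: producing an orthonormal, $H^2$-valued approximate identity concentrated at a prescribed point of a general compact connected group. On the circle this is the classical Fejér kernel on disjoint frequency blocks; the points to get right in the general setting are that the positive cone of a linearly ordered amenable group admits a Følner sequence via the translation trick above, that the associated kernels form an approximate identity (checked at the level of Fourier coefficients through the Følner ratios), and that one needs only finitely many characters at each approximation stage so that a sequence, not merely a net, suffices.
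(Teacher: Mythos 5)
Your proof is correct, and it takes a genuinely different route from the paper's. The paper disposes of Corollary~\ref{Corollary8} in one line: reduce to the Toeplitz operator $T_{\check k}$ via Proposition~\ref{Proposition5} and invoke \cite[Theorem 3]{SbMath}. You share the first step (the reduction to $T_\varphi$ with $\varphi=\check k$) but then prove the Toeplitz-side statement from scratch by a Hartman--Wintner/Coburn-type localization: if $\varphi(x_0)=0$, you build an orthonormal sequence of analytic polynomials $f_n=|A_n|^{-1/2}\sum_{\chi\in A_n}\overline{\chi(x_0)}\chi$ supported on pairwise disjoint finite subsets $A_n\subset X_+$ whose probability densities $|f_n|^2\,dx$ concentrate at $x_0$, so that $\|T_\varphi f_n\|,\|T_{\bar\varphi}f_n\|\to 0$, ruling out both $\Phi_+$ and $\Phi_-$. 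The details all check out: every Abelian group is amenable, so Følner sets exist; the linear order lets you translate them into $X_+$ (and past the maximum of the previously chosen sets, so that disjointness and hence orthonormality hold) without disturbing the ratios $|A_n\cap\eta A_n|/|A_n|$; Stone--Weierstrass gives uniform approximation of $|\varphi|^2$ by trigonometric polynomials, and since each stage involves only finitely many characters $\eta$ you do extract a sequence rather than a net; the standard facts $T_\varphi^*=T_{\bar\varphi}$, $\|P_+(\varphi f)\|\le\||\varphi|f\|$, and ``$A\in\Phi_+$ forbids orthonormal $(e_n)$ with $\|Ae_n\|\to 0$'' are all used correctly. What your approach buys is self-containedness and transparency about the mechanism (zeros of a continuous symbol kill semi-Fredholmness by concentration of mass); what the paper's approach buys is brevity, since the cited theorem already packages exactly this statement for Toeplitz operators over $G$. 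One marginal point in your favor: you explicitly set aside the degenerate case $X=\{1\}$, where the statement as literally written would fail, an issue the paper does not mention.
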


This follows directly from Proposition \ref{Proposition5} and \cite[Theorem 3]{SbMath}.$\Box$

\section{Spectral properties of Wiener-Hopf operators}

Proposition \ref{Proposition5} allows also to extend
the  results of Section 4 in \cite{SbMath}  to Wiener-Hopf operators. First of all, we state the following corollary of this Proposition.

\begin{corollary}\label{Corollary100} Let $ \check k \in L^\infty(G)$ and $R(\check k)$ denotes the essential range of $\check k$. Then
$$
R(\check k)\subseteq \sigma(W_k)\subseteq \overline{\rm conv}(R(\check k))
$$
where $\overline{\rm conv}$ stands for the closed convex hole of a set in the complex plane.
\end{corollary}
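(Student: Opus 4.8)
The plan is to pass to the Toeplitz picture via Proposition~\ref{Proposition5} and then establish the two inclusions for $T_\varphi$. Since $W_k$ is unitarily equivalent to the Toeplitz operator $T_\varphi$ with symbol $\varphi=\check k\in L^\infty(G)$, and unitary equivalence preserves the spectrum, we have $\sigma(W_k)=\sigma(T_\varphi)$. Hence it suffices to prove, for an arbitrary $\varphi\in L^\infty(G)$, that
\[
R(\varphi)\subseteq\sigma(T_\varphi)\subseteq\overline{\rm conv}(R(\varphi)).
\]
(These are precisely the analogues for Toeplitz operators recorded in Section~4 of \cite{SbMath}, so one may alternatively invoke them directly through Proposition~\ref{Proposition5}; below I sketch self-contained arguments.)

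For the right-hand inclusion I would use the numerical range. For $f\in H^2(G)$, since $P_+$ is self-adjoint and fixes $f$,
\[
\langle T_\varphi f,f\rangle=\langle P_+(\varphi f),f\rangle=\langle \varphi f,f\rangle=\int_G\varphi\,|f|^2\,dx.
\]
When $\|f\|_2=1$ the measure $|f|^2\,dx$ is a probability measure on $G$, so this integral lies in $\overline{\rm conv}(R(\varphi))$. Thus the numerical range $W(T_\varphi)$ is contained in the closed convex set $\overline{\rm conv}(R(\varphi))$, and since the spectrum of a bounded Hilbert-space operator is always contained in the closure of its numerical range, $\sigma(T_\varphi)\subseteq\overline{W(T_\varphi)}\subseteq\overline{\rm conv}(R(\varphi))$.

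The left-hand inclusion is the analogue of the Hartman--Wintner theorem and is the part I expect to be the main obstacle. After replacing $\varphi$ by $\varphi-\lambda$ (note $T_{\varphi-\lambda}=T_\varphi-\lambda I$), it reduces to showing that $0\in R(\varphi)$ forces $T_\varphi$ to be non-invertible; I would argue by contraposition. Suppose $T_\varphi$ is invertible, so $\|T_\varphi h\|_2\ge\delta\|h\|_2$ for some $\delta>0$ and all $h\in H^2(G)$. The key point is that for $\chi\in X_+$ multiplication by $\chi$ sends $H^2(G)$ into itself (the Fourier support is shifted into $\chi X_+\subseteq X_+$), while $\|P_+(\varphi g)\|_2\le\|\varphi g\|_2$ and $|\chi|\equiv1$. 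Applying the lower bound to $h=\chi f$, where $f\in\chi^{-1}H^2(G)$, yields
\[
\|\varphi f\|_2=\|\varphi\chi f\|_2\ge\|P_+(\varphi\chi f)\|_2=\|T_\varphi(\chi f)\|_2\ge\delta\|\chi f\|_2=\delta\|f\|_2.
\]

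It remains to exhaust $L^2(G)$ by the spaces $\chi^{-1}H^2(G)$. Because $X$ is linearly ordered, for each $\xi\in X$ one can choose $\chi\in X_+$ with $\chi\geq\xi^{-1}$, giving $\xi\in\chi^{-1}X_+$; hence $\bigcup_{\chi\in X_+}\chi^{-1}X_+=X$ and $\bigcup_{\chi\in X_+}\chi^{-1}H^2(G)$ contains all trigonometric polynomials, so it is dense in $L^2(G)$. Since $M_\varphi$ is bounded, the inequality $\|\varphi f\|_2\ge\delta\|f\|_2$ extends from this dense set to all of $L^2(G)$, which forces $|\varphi|\ge\delta$ almost everywhere and therefore $0\notin R(\varphi)$, a contradiction. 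The delicate step is exactly this passage from a lower bound on $H^2(G)$ to one on all of $L^2(G)$: it replaces the classical multiplication-by-$z^n$ shift of the circle case by the order-theoretic exhaustion $\bigcup_{\chi\in X_+}\chi^{-1}X_+=X$, and it is where the linear order on $X$ is genuinely used.
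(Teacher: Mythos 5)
Your proposal is correct, and its opening move coincides exactly with the paper's entire proof: the paper disposes of the corollary in one line by combining Proposition~\ref{Proposition5} with Murphy's spectral inclusion theorem for Toeplitz operators over ordered groups \cite{Murphy}. Where you differ is that you then supply the content of the cited result yourself: the inclusion $\sigma(T_\varphi)\subseteq\overline{\rm conv}(R(\varphi))$ via the numerical range (using that $\langle T_\varphi f,f\rangle=\int_G\varphi|f|^2dx$ averages $\varphi$ against a probability measure), and the Hartman--Wintner-type inclusion $R(\varphi)\subseteq\sigma(T_\varphi)$ by bootstrapping the lower bound $\|T_\varphi h\|\ge\delta\|h\|$ from $H^2(G)$ to a lower bound for $M_\varphi$ on all of $L^2(G)$ through the exhaustion $\bigcup_{\chi\in X_+}\chi^{-1}H^2(G)$. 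Both arguments are sound; the only point worth making explicit is that a trigonometric polynomial involves finitely many characters, so you need a \emph{single} $\chi\in X_+$ dominating all of their inverses at once --- this is available precisely because the order on $X$ is linear (the family $\{\chi^{-1}X_+\}_{\chi\in X_+}$ is upward directed), and it is indeed, as you say, where the order structure enters. The trade-off is clear: the paper's citation is shorter and leans on \cite{Murphy}, while your version is self-contained and makes visible that the classical circle-case proof (multiplication by $z^n$) survives under the sole hypothesis of a linear order on the dual group.
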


Indeed, this follows from  a corresponding result of Murphy for Toeplitz operators  \cite{Murphy} and Proposition \ref{Proposition5}.

\

Hereinafter, by {\it holes} of a compact connected set
$V\subset \mathbb{C}$ we mean the bounded components of its complement $\mathbb{C}\setminus V$. The following theorem describe the spectra and essential spectra of Wiener-Hopf operators.

\begin{theorem}\label{Theorem9} Let $\check k\in C(G)$. Then

{\rm 1)} the essential Fredholm spectrum $\sigma_e(W_k)$
is obtained from the set $ \check k(G)$
by adding those of its holes $\Lambda$ (if they exist) for
which $ \check k-\lambda\notin \Phi(G)$ for all (for one)
$\lambda\in \Lambda$;

{\rm 2)} the spectrum $\sigma(W_k)$ is obtained from the set
$\sigma_e(W_k)$ by adding those holes $\Lambda$ of the set $ \check k(G)$
(if they exist) for
such that $ \check k-\lambda\in \Phi(G)\setminus \exp(C(G))$ for all
(for one) $\lambda\in \Lambda$;

{\rm 3)} the essential Weyl spectrum
$\sigma_{w}(W_k)=\sigma(W_k)$;

{\rm 4)} the spectra $\sigma_e(W_k)$ and $\sigma(W_k)$
are connected.
\end{theorem}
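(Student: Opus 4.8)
The plan is to leverage Proposition~\ref{Proposition5} to transfer the whole problem to the Toeplitz setting and then invoke the corresponding spectral result for Toeplitz operators over $G$, namely Theorem~5 (or the analogue) from \cite{SbMath}, together with the index formula Theorem~\ref{Theorem2}. Since $W_k$ is unitarily equivalent to $T_{\check k}$ and $\check k\in C(G)$, the spectra, essential spectra and Weyl spectra of $W_k$ coincide with those of $T_{\check k}$, so each assertion reduces to the known description for continuous-symbol Toeplitz operators. The key organizing identity is that $\lambda\in\sigma_e(W_k)$ precisely when $W_k-\lambda=W_{k-\lambda 1}$ fails to be Fredholm, and by Theorem~\ref{Theorem2} this happens exactly when $\check k-\lambda\notin\Phi(G)$.

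\medskip

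First I would establish part~{\rm 1)}. The inclusion $\check k(G)\subseteq\sigma_e(W_k)$ follows from Corollary~\ref{Corollary8}: if $\lambda\in\check k(G)$ then $\check k-\lambda$ is not invertible in $L^\infty(G)$ (it vanishes somewhere, being continuous), so $W_k-\lambda$ is not even semi-Fredholm, hence not Fredholm. For $\lambda\notin\check k(G)$, the function $\check k-\lambda$ lies in $C(G)^{-1}$, so by Bohr--van Kampen $\check k-\lambda=\chi e^g$; whether $W_k-\lambda$ is Fredholm is then governed entirely by whether $\chi\in X^i$, i.e.\ whether $\check k-\lambda\in\Phi(G)$. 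The essential point is that $\Phi(G)$ is open and the rotation index is locally constant on $C(G)^{-1}$, so on each connected component of $\mathbb{C}\setminus\check k(G)$ the Fredholm property and the index are constant; this justifies the phrase ``for all (for one) $\lambda\in\Lambda$.'' The unbounded component contains large $\lambda$ where $\check k-\lambda\approx-\lambda$ is a nonzero constant, hence lies in $\exp(C(G))\subset\Phi(G)$ with index $0$, so it contributes nothing to $\sigma_e$; thus $\sigma_e(W_k)$ is $\check k(G)$ together with exactly those holes on which $\check k-\lambda\notin\Phi(G)$.

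\medskip

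For part~{\rm 2)}, on a hole $\Lambda$ where $\check k-\lambda\in\Phi(G)$ the operator $W_k-\lambda$ is Fredholm; it is invertible iff its index vanishes and its kernel and cokernel are trivial. By Theorem~\ref{Theorem2}, ${\rm Ind}(W_k-\lambda)=-{\rm ind}(\check k-\lambda)$, which is zero precisely when $\check k-\lambda\in\exp(C(G))$ (the $\chi=1$ case). Invoking the Coburn-type property of Toeplitz operators from \cite{SbMath} (one of the kernel/cokernel is trivial for a nonzero continuous symbol), a Fredholm $W_k-\lambda$ of index zero is actually invertible. Hence the holes added to $\sigma_e$ to form $\sigma$ are exactly those with $\check k-\lambda\in\Phi(G)\setminus\exp(C(G))$, which is the stated description. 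Part~{\rm 3)} is then immediate: the Weyl spectrum is the set of $\lambda$ for which $W_k-\lambda$ is either non-Fredholm or Fredholm of nonzero index; combining~{\rm 1)} and~{\rm 2)} with the index formula shows this set is exactly $\sigma(W_k)$.

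\medskip

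Finally, for part~{\rm 4)} I would argue connectedness directly from the construction: $\check k(G)$ is the continuous image of the connected compact group $G$, hence connected, and both $\sigma_e(W_k)$ and $\sigma(W_k)$ are obtained from it by filling in some of its holes. Filling holes of a compact connected set preserves connectedness (one is adjoining bounded complementary components, which attach along the boundary), so both spectra are connected. The main obstacle I anticipate is the sharp passage in part~{\rm 2)} from ``Fredholm of index zero'' to ``invertible'': this is not automatic for general operators and requires the Coburn-type triviality of one of $\mathrm{Ker}(W_k-\lambda)$, $\mathrm{Coker}(W_k-\lambda)$, which must be quoted from the Toeplitz theory in \cite{SbMath} via Proposition~\ref{Proposition5}; verifying that the cited results apply under the weaker hypothesis $k\in l_2(X_+)$, $\check k\in C(G)$ rather than $k\in l_1$ is where the care is needed.
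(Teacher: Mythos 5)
Your proposal is correct and follows essentially the same route as the paper: the paper's entire proof consists of citing Proposition~\ref{Proposition5} (unitary equivalence with $T_{\check k}$), Theorem~\ref{Theorem2} (the index formula), and the corresponding spectral description for Toeplitz operators over $G$ in \cite[Theorem 5]{SbMath}. Your write-up simply unpacks in more detail what that cited Toeplitz theorem provides (local constancy of the index on holes, the Coburn-type step from index zero to invertibility, connectedness by filling holes), which is exactly the content the paper delegates to the reference.
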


\begin{proof}
Theorem \ref{Theorem9} follows from Proposition \ref{Proposition5}, Theorem \ref{Theorem2} and the corresponding results for Toeplitz operators on $H^p(G)$ \cite[Theorem 5]{SbMath}.
\end{proof}

\begin{theorem}\label{th1} Let $\check k\in H^\infty(G)$.

(i) Operator $W_k$ is invertible in $l_2(X)$ if and only if $\check k$ is invertible in $H^\infty(G)$.

(ii) The spectrum $\sigma(W_k)$ equals to  the spectrum $\sigma_{H^\infty}(\check k)$ of $\check k$ in the algebra $H^\infty(G)$.
\end{theorem}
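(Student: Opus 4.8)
The plan is to transfer everything to the Toeplitz model. By Proposition \ref{Proposition5}, $W_k$ is unitarily equivalent to $T_\varphi$ on $H^2(G)$ with $\varphi=\check k\in H^\infty(G)$, so it suffices to prove (i) and (ii) for $T_\varphi$. The first thing I would record is the structural fact that $H^\infty(G)\cdot H^2(G)\subseteq H^2(G)$: if $\varphi\in H^\infty(G)$ and $h\in H^2(G)$, then $\varphi h\in L^2(G)$, and computing $\widehat{\varphi h}=\widehat\varphi\ast\widehat h$ one sees that a term $\widehat\varphi(\eta)\widehat h(\xi\eta^{-1})$ can be nonzero only when $1\le\eta\le\xi$, so $\widehat{\varphi h}$ is supported in $X_+$. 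Hence $T_\varphi h=P_+(\varphi h)=\varphi h$ acts by multiplication and $\varphi\mapsto T_\varphi$ is multiplicative on $H^\infty(G)$. Since also $T_\varphi-\lambda=T_{\varphi-\lambda}$ with $\varphi-\lambda\in H^\infty(G)$, part (ii) is precisely part (i) applied to $\varphi-\lambda$; thus the whole content lies in (i).

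For the easy implication of (i): if $\varphi$ is invertible in $H^\infty(G)$ with inverse $\psi$, multiplicativity gives $T_\psi T_\varphi=T_\varphi T_\psi=T_1=I$, so $T_\varphi$ is invertible.

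The substance is the converse. Assuming $T_\varphi$ invertible with $M:=\|T_\varphi^{-1}\|$, I would set $\psi:=T_\varphi^{-1}\mathbf 1\in H^2(G)$, where $\mathbf 1$ denotes the identity character. Since $\varphi\psi\in H^2(G)$ by the module property, $\varphi\psi=T_\varphi\psi=\mathbf 1$, i.e.\ $\varphi\psi=1$ a.e. Next, for any analytic polynomial $f\in\mathrm{Pol}_+(G)$ the product $\psi f$ lies in $H^2(G)$ and $T_\varphi(\psi f)=P_+(\varphi\psi f)=f$, so $\psi f=T_\varphi^{-1}f$ and therefore $\|\psi f\|_2\le M\|f\|_2$. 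To upgrade this to all of $\mathrm{Pol}(G)$, given $g\in\mathrm{Pol}(G)$ I would pick $\chi\in X_+$ with $\chi\ge\xi^{-1}$ for each of the finitely many characters $\xi$ occurring in $g$ (possible since $X$ is linearly ordered), so that $\chi g\in\mathrm{Pol}_+(G)$; as multiplication by the unimodular $\chi$ is an $L^2$-isometry, $\|\psi g\|_2=\|\psi\chi g\|_2\le M\|\chi g\|_2=M\|g\|_2$. Thus multiplication by $\psi$ is bounded by $M$ on the dense subspace $\mathrm{Pol}(G)\subset L^2(G)$.

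The final step, which I regard as the main obstacle since a priori $\psi$ is only in $H^2(G)$, is to conclude $\psi\in L^\infty(G)$. Here I would approximate an arbitrary $u\in L^\infty(G)$ in $L^2$ by $g_n\in\mathrm{Pol}(G)$, pass to a subsequence with $g_n\to u$ a.e., and apply Fatou's lemma together with the uniform bound to obtain $\|\psi u\|_2\le M\|u\|_2$ for every $u\in L^\infty(G)$. Taking $u=1_E$ gives $\int_E|\psi|^2\le M^2|E|$ for every measurable $E$, which forces $|\psi|\le M$ a.e. Hence $\psi\in H^2(G)\cap L^\infty(G)=H^\infty(G)$, and $\varphi\psi=1$ shows that $\varphi$ is invertible in $H^\infty(G)$ with $\varphi^{-1}=\psi$. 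Finally (ii) follows at once: $\lambda\notin\sigma(W_k)$ iff $T_{\varphi-\lambda}$ is invertible iff, by (i), $\varphi-\lambda$ is invertible in $H^\infty(G)$, i.e.\ $\lambda\notin\sigma_{H^\infty}(\check k)$.
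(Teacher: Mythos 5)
Your proof is correct and follows the paper's overall strategy: pass to the Toeplitz model via Proposition \ref{Proposition5}, use that $H^\infty(G)$ acts on $H^2(G)$ by multiplication so that $\varphi\mapsto T_\varphi$ is multiplicative there (the paper cites \cite[Proposition 1]{SbMath} for this), obtain the easy direction from $T_\varphi T_{\varphi^{-1}}=I$, and for the converse produce $\psi\in H^2(G)$ with $\varphi\psi=1$ and then upgrade $\psi$ to $H^\infty(G)$; your reduction of (ii) to (i) via $T_\varphi-\lambda I=T_{\varphi-\lambda}$ matches the paper's identity $W_k-\lambda I=W_{k-\lambda}$. The one point where you genuinely diverge is the upgrading step: the paper deduces $\psi H^2(G)=H^2(G)$ and invokes \cite[Lemma 2]{SbMath} to conclude $\psi\in H^\infty(G)$, whereas you prove this from scratch --- bounding $\|\psi f\|_2\le\|T_\varphi^{-1}\|\,\|f\|_2$ on $\mathrm{Pol}_+(G)$, pushing the bound to all trigonometric polynomials by multiplying by a dominating character (legitimate, since $X$ is linearly ordered and characters are unimodular), extending to $L^\infty(G)$ by Fatou, and reading off $|\psi|\le\|T_\varphi^{-1}\|$ a.e.\ from indicator functions. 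Your route is self-contained and yields the explicit quantitative bound $\|\varphi^{-1}\|_{L^\infty}\le\|T_\varphi^{-1}\|$, at the cost of redoing analysis that the paper outsources to a cited lemma (which presumably rests on a closed-graph or spectral-radius argument); both are sound.
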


\begin{proof}
(i) Operator $W_k$ is invertible in $l_2(X_+)$ if and only if $T_{\check k}$ is invertible in $H^\infty(G)$ by Proposition \ref{Proposition5}. 

If $\check k$ is invertible in $H^\infty(G)$ then $\check k\psi=1$ for some $\psi\in H^\infty(G)$. But
$$
T_{\check k}T_{\psi}=T_{\check k\psi}=T_{\psi}T_{\check k}
$$
by \cite[Proposition 1]{SbMath} and $T_{\check k\psi}=I$ the unit operator in $H^2(G)$. Thus, $W_k$ is invertible in $l_2(X_+)$.

Conversely, if $W_k$ is invertible in $l_2(X_+)$, i.~e. 
$T_{\check k}$ is invertible in $H^2(G)$, then 
$\check k H^2(G)=T_{\check k}H^2(G)=H^2(G)$, since $T_{\check k}f=\check k f$ for all $f\in H^2(G)$.
It follows that $\check k\psi=1$ for some $\psi\in H^\infty(G)$. Then $H^2(G)=\psi\check k H^2(G)=\psi H^2(G)$. Then $\psi\in H^\infty(G)$ by \cite[Lemma 2]{SbMath}, and hence  $\check k$ is invertible in $H^\infty(G)$.

(ii) This follows from (i) and the equality 
\begin{align*}
(W_ k-\lambda I)g&= 1_{X_+}(k\ast g)-\lambda g\\
&=1_{X_+}((k-\lambda)\ast g)=W_ {k-\lambda} g
\end{align*}
which is valid for all  $g\in l_2(X_+)$.
\end{proof}

\begin{lemma}\label{mod k=1} Let $\check k\in L^\infty(G)$, $|\check k|\equiv 1$, and $W_k$ is invertible in $l_2(X_+)$. If $\psi\in H^\infty(G)$ satisfies $\|\check k-\overline\psi\|_{L^\infty}<1$
then $\psi$ is invertible in $H^\infty(G)$.
\end{lemma}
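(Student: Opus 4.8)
The plan is to reduce the invertibility of $\psi$ in $H^\infty(G)$ to the invertibility of the Toeplitz operator $T_\psi$ in $H^2(G)$, and to obtain the latter by combining the given invertibility of $W_k$ (equivalently, of $T_{\check k}$, by Proposition \ref{Proposition5}) with a Neumann-series argument. First I would translate the hypothesis into a multiplicative statement. Since $|\check k|\equiv 1$ we have $\overline{\check k}\,\check k=1$ a.e., so multiplying $\check k-\overline\psi$ pointwise by the unimodular factor $\overline{\check k}$ gives $\overline{\check k}(\check k-\overline\psi)=1-\overline{\check k\psi}$. Because multiplication by a unimodular function and complex conjugation both preserve the $L^\infty$ norm, the hypothesis $\|\check k-\overline\psi\|_{L^\infty}<1$ is thereby seen to be equivalent to
$$
\|1-\check k\psi\|_{L^\infty}<1 .
$$

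The second step uses this estimate at the operator level. Since $1-\check k\psi\in L^\infty(G)\subset L^2(G)$ (as $G$ is compact), the operator $T_{1-\check k\psi}$ is bounded with $\|T_{1-\check k\psi}\|=\|1-\check k\psi\|_{L^\infty}<1$ by Corollary \ref{Corollary6} together with Proposition \ref{Proposition5}; moreover $T_{1-\check k\psi}=I-T_{\check k\psi}$ by linearity of $\varphi\mapsto T_\varphi$ and $T_1=I$. Hence $T_{\check k\psi}=I-(I-T_{\check k\psi})$ is invertible in $H^2(G)$ by the Neumann series. The key algebraic point is that $\psi\in H^\infty(G)$ is \emph{analytic}, so the Toeplitz product rule of \cite[Proposition 1]{SbMath} gives $T_{\check k}T_\psi=T_{\check k\psi}$; indeed, for $f\in H^2(G)$ one has $\psi f\in H^2(G)$ and so $P_+(\psi f)=\psi f$, whence $T_{\check k}T_\psi f=P_+(\check k\psi f)=T_{\check k\psi}f$.

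Now $T_{\check k}$ is invertible by hypothesis and $T_{\check k}T_\psi=T_{\check k\psi}$ is invertible by the previous step, so $T_\psi=T_{\check k}^{-1}T_{\check k\psi}$ is invertible in $H^2(G)$ as a product of invertible operators. Finally, applying Theorem \ref{th1}(i) with symbol $\psi\in H^\infty(G)$ (so that $T_\psi$ plays the role of $W_k$ via Proposition \ref{Proposition5}), the invertibility of $T_\psi$ is equivalent to the invertibility of $\psi$ in $H^\infty(G)$, which is the desired conclusion.

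The main obstacle I anticipate is the very first step: spotting that the unimodularity of $\check k$ lets one convert the additive perturbation hypothesis $\|\check k-\overline\psi\|_{L^\infty}<1$ into the multiplicative form $\|1-\check k\psi\|_{L^\infty}<1$, which is exactly what makes $T_{\check k\psi}$ a small perturbation of the identity. Once this is in hand, the essential use of the analyticity of $\psi$ to factor $T_{\check k\psi}=T_{\check k}T_\psi$, and the cited equivalence between operator and symbol invertibility, are routine.
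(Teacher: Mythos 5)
Your proof is correct and follows essentially the same route as the paper: convert the hypothesis to $\|1-\check k\psi\|_{L^\infty}<1$ via unimodularity of $\check k$, deduce invertibility of $T_{\check k\psi}$ by a Neumann-series argument using $\|T_{1-\check k\psi}\|\le\|1-\check k\psi\|_{L^\infty}$, factor $T_{\check k\psi}=T_{\check k}T_\psi$ using analyticity of $\psi$, and conclude via Theorem \ref{th1}. Your write-up is if anything slightly more detailed, since you justify the factorization $T_{\check k}T_\psi=T_{\check k\psi}$ explicitly rather than citing it.
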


\begin{proof} Theorem \cite[Theorem 1]{SbMath} yields
\begin{align}\label{eq1}
\|I-T_{\check k\psi}\|_{H^2}&=\|T_{1-\check k\psi}\|_{H^2}\\
&\le \|1-\check k\psi\|_{L^\infty}=\|\overline{\check k}-\psi\|_{L^\infty}=\|\check k-\overline\psi\|_{L^\infty}.\nonumber
\end{align}
Since $\|\check k-\overline\psi\|_{L^\infty}<1$, it follows that the operator $T_{\check k\psi}=T_{\check k}T_{\psi}$ is invertible. Since $T_{\check k}$ is also invertible, operator $T_{\psi}$ is invertible, as well.
Now Theorem \ref{th1} implies that $\psi$ is invertible in $H^\infty(G)$.
\end{proof}

\begin{theorem}\label{th6} (cf. \cite[Theorem 3.1]{Adukov}) Let $\check k\in L^\infty(G)$, $|\check k|\equiv 1$.

(i) Operator $W_k$ is left (right) invertible in $l_2(X_+)$ if and only if the inequality ${\rm dist}_{L^\infty}(\check k,H^\infty(G))<1$ (respectively, ${\rm dist}_{L^\infty}(\overline{\check k},H^\infty(G))<1$) holds.

(ii)  Operator $W_k$ is  invertible in $l_2(X_+)$ if and only if $\|\check k-\psi\|_{L^\infty}<1$ for some invertible $\psi$ in $H^\infty(G)$.
\end{theorem}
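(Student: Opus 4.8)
The plan is to transfer everything to the Toeplitz operator $T_{\check k}$ on $H^2(G)$ via Proposition \ref{Proposition5} and to exploit that $|\check k|\equiv 1$. First I would record the basic orthogonal decomposition: writing $P_-=I-P_+$ and setting, for $f\in H^2(G)$, the Hankel-type vector $H_{\check k}f:=P_-(\check k f)$, the splitting $\check k f=T_{\check k}f+H_{\check k}f$ is orthogonal, so from $\|\check k f\|_2=\|f\|_2$ (valid because $|\check k|\equiv 1$) one gets $\|f\|_2^2=\|T_{\check k}f\|_2^2+\|H_{\check k}f\|_2^2$ for all $f\in H^2(G)$. Hence $T_{\check k}$ is bounded below (equivalently, left invertible on the Hilbert space $H^2(G)$) if and only if $\|H_{\check k}\|<1$. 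The elementary inequality $\|H_{\check k}\|\le{\rm dist}_{L^\infty}(\check k,H^\infty(G))$ is immediate: for $h\in H^\infty(G)$ one has $hf\in H^2(G)$, so $H_{\check k}f=P_-((\check k-h)f)$ and $\|H_{\check k}f\|_2\le\|\check k-h\|_\infty\|f\|_2$.

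For part (i), the left-invertibility assertion follows once the reverse Nehari inequality ${\rm dist}_{L^\infty}(\check k,H^\infty(G))\le\|H_{\check k}\|$ is in hand, giving $\|H_{\check k}\|={\rm dist}_{L^\infty}(\check k,H^\infty(G))$; combined with the previous paragraph this yields $W_k$ left invertible $\iff T_{\check k}$ bounded below $\iff\|H_{\check k}\|<1\iff{\rm dist}_{L^\infty}(\check k,H^\infty(G))<1$. The right-invertibility statement then costs nothing: $W_k$ is right invertible iff $T_{\check k}^\ast=T_{\overline{\check k}}$ is left invertible, and since $\overline{\check k}$ is again unimodular the same criterion applies with $\check k$ replaced by $\overline{\check k}$. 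I expect the genuine obstacle of the whole theorem to sit exactly here, in the nontrivial direction of Nehari's theorem for $H^\infty(G)$; I would establish it by Hahn--Banach duality, identifying the annihilator of $H^\infty(G)$ in $L^1(G)$ and realizing the distance as the norm of the bilinear form $(f,g)\mapsto\langle\check k f,g\rangle$ on $H^2(G)\times H^2_-(G)$, or else cite the corresponding result from \cite{SbMath}.

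For the ``if'' direction of part (ii), suppose $\psi\in H^\infty(G)$ is invertible in $H^\infty(G)$ and $\|\check k-\psi\|_{L^\infty}<1$. As $\psi$ is analytic, $T_{\overline{\check k}}T_\psi=T_{\overline{\check k}\psi}$ by \cite{SbMath}; moreover $1-\overline{\check k}\psi=\overline{\check k}(\check k-\psi)$, so $\|1-\overline{\check k}\psi\|_\infty=\|\check k-\psi\|_\infty<1$ because $|\overline{\check k}|\equiv 1$. Using the norm estimate $\|T_\varphi\|\le\|\varphi\|_\infty$ from \cite{SbMath},
\[
\|I-T_{\overline{\check k}}T_\psi\|=\|T_{1-\overline{\check k}\psi}\|\le\|1-\overline{\check k}\psi\|_\infty<1,
\]
so $T_{\overline{\check k}}T_\psi$ is invertible. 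Since $\psi$ is invertible in $H^\infty(G)$, Theorem \ref{th1} makes $T_\psi$ invertible, whence $T_{\overline{\check k}}=(T_{\overline{\check k}}T_\psi)T_\psi^{-1}$ and $T_{\check k}=T_{\overline{\check k}}^\ast$ are invertible; by Proposition \ref{Proposition5} so is $W_k$.

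For the ``only if'' direction of part (ii), assume $W_k$ is invertible, so that both $T_{\check k}$ and its adjoint $T_{\overline{\check k}}$ are invertible. Invertibility forces left invertibility, so part (i) supplies $h\in H^\infty(G)$ with $\|\check k-h\|_\infty<1$. Repeating the computation above with $\psi$ replaced by $h$ gives $\|I-T_{\overline{\check k}}T_h\|\le\|1-\overline{\check k}h\|_\infty=\|\check k-h\|_\infty<1$, so $T_{\overline{\check k}}T_h$ is invertible; since $T_{\overline{\check k}}$ is invertible, $T_h=T_{\overline{\check k}}^{-1}(T_{\overline{\check k}}T_h)$ is invertible, and Theorem \ref{th1} shows that $h$ is invertible in $H^\infty(G)$. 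Taking $\psi=h$ finishes the proof. Apart from the Nehari inequality flagged above, every step is a short computation resting on the multiplicativity rule $T_\varphi T_\psi=T_{\varphi\psi}$ for analytic $\psi$, the bound $\|T_\varphi\|\le\|\varphi\|_\infty$, and Theorem \ref{th1}.
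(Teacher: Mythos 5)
Your proof is correct and follows essentially the same route as the paper: the Pythagorean identity $\|T_{\check k}f\|_{L^2}^2+\|H_{\check k}f\|_{L^2}^2=\|f\|_{L^2}^2$ for unimodular $\check k$ together with Nehari's distance formula (which the paper cites as \cite[Theorem 1.5]{StPeter}, not \cite{SbMath}) for part (i), and the perturbation estimate $\|I-T_{\overline{\check k}\psi}\|\le\|\check k-\psi\|_{L^\infty}<1$ for part (ii). The only cosmetic difference is that in the ``only if'' half of (ii) you rerun that estimate directly on the $h$ produced by the left-invertibility half of (i), where the paper instead routes through its Lemma \ref{mod k=1}; your variant lands exactly on the stated condition $\|\check k-\psi\|_{L^\infty}<1$.
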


\begin{proof}

(i) Consider the Hankel operator $H_{\check k}:H^2(G)\to L^2(G)\ominus H^2(G)$ with symbol $\check k$ (see \cite{StPeter}). Then $H_{\check k}f+T_{\check k}f= {\check k}f$, and therefore $\|H_{\check k}f\|_{L^2}^2+\|T_{\check k}f\|_{L^2}^2= \|f\|_{L^2}^2$. If $W_k$ is left  invertible then $T_{\check k}$ is left  invertible too by Proposition \ref{Proposition5}. Then $\|T_{\check k}f\|_{L^2}^2\ge C\|f\|_{L^2}^2$ for some  constant $C>0$. So, we conclude that $\|H_{\check k}f\|_{L^2}<1$. It remains to note that $\|H_{\check k}f\|_{L^2}={\rm dist}_{L^\infty}(\check k,H^\infty(G))$ by \cite[Theorem 1.5]{StPeter}. 

Now the statement for the right inverse follows from Proposition \ref{Proposition5}, and the following equality for the adjoint operator $T_{\check k}^*=T_{\overline{{\check k}}}$. 

(ii) If $W_k$ is  invertible in $l_2(X_+)$ then (i) yields that $\|\overline{\check k}-\psi\|_{L^\infty}<1$ for some $\psi\in H^\infty(G)$. Then $\psi$ is  invertible in $H^\infty(G)$ by Lemma \ref{mod k=1}.

Now let $\|\check k-\psi\|_{L^\infty}<1$ for some invertible $\psi$ in $H^\infty(G)$. Note that the inequality similar to \eqref{eq1} is valid if we replace $\check k$ by $\overline{\check k}$.  Thus
$\|I-T_{\overline{\check k}\psi}\le \|\check k-\psi\|_{L^\infty}<1$. It follows that the operator $T_{\overline{\check k}\psi}=T_{\check k}^*T_{\psi}$ is invertible. The operator $T_{\psi}$ is invertible, too. Indeed, since $\psi\phi=1$ for some $\phi\in H^\infty(G)$, one has
$T_\phi T_\psi=T_\psi T_\phi=T_1=I$.
We conclude that the operator $T_{\check k}^*$ is invertible, and therefore $T_{\check k}$ and $W_k$ are invertible, as well.

\end{proof}


\begin{thebibliography}{99}

\bibitem{Adukov}
Adukov, V. Wiener-Hopf operators on a subsemigroup of a discrete  torsion free abelian group // Int. Eq. Oper. Th. -- 1993. -- 16. -- P. 305 -- 332.

\bibitem{EMRS}
 T. Ehrhardt, C. van der Mee, L. Rodman, I. Spitkovski.  Factorization
 in Weighted Wiener Matrix Algebras on Linearly Ordered Abelian Groups //
 Int. Eq.  Oper. Th. -- 2007 -- 58, N 1. -- P. 65 -- 86.
 
 
\bibitem{SbMath}
A. R. Mirotin, Fredholm and spectral properties of Toeplitz operators on 
$H^p$  spaces over ordered groups, Sb. Math.-- 2011.-- Vol.  202, N 5. -- P. 721--737.  



\bibitem{StPeter}
A. R. Mirotin, Compact Hankel operators on compact Abelian groups, St. Petersburg Math. J.,
 Vol. 33 (2022), No. 3, P. 569--584,
https://doi.org/10.1090/spmj/1715


\bibitem{Murphy}
Murphy, G.J. Spectral and index theory for Toeplitz operators,  Proc. Roy. Irish Acad. Sect. A. -- 1991. -- Vol. 91, N 1. -- P. 1--6.

\bibitem{Pont}
 Pontrjagin, L. Topological groups, Prinston, Prinston University Press, 1946.

\bibitem{Rud}
 Rudin, W.  Fourier analysis on groups -- New  York and London: Intersciense
Publishers,  1962. -- 285 p.

 


\end{thebibliography}
\end{document}